\newtheorem{theorem}{Theorem}
\newtheorem{corollary}[theorem]{Corollary}
\newtheorem{conjecture}[theorem]{Conjecture}
\theoremstyle{definition}
\newtheorem{question}[theorem]{Question}
\newtheorem*{acknowledgement}{Acknowledgments}
\numberwithin{equation}{section}
\newcommand{\claimproofend}{\hspace*{.1mm}\hspace{\fill}}
\begin{document}
\title{Antifactors in bipartite multigraphs}
\author{Louis Esperet}
\address{Laboratoire G-SCOP (CNRS, Univ. Grenoble Alpes),
  Grenoble, France}
\email{louis.esperet@grenoble-inp.fr}
\date{\today}

\begin{abstract}
  Let $G$ be a $q$-regular bipartite graph with bipartition
  $(U,V)$. It was proved by Lu, Wang, and Yan in 2020 that $G$ has a
  spanning subgraph $H$ such
that each vertex of $U$ has degree 1 in $H$,
and each vertex of $V$ has degree distinct from 1 in $H$. We extend
the result to multigraphs, under the condition
that $q$ is a prime power and the number of perfect matchings of $G$
is not divisible by $q$. The condition on the number of perfect
matchings is necessary for multigraphs.

We conclude with a conjecture on the limiting distribution of the number
of perfect matchings modulo $q$ in a random bipartite $q$-regular
graph.\\

\noindent {\it Keywords:} Perfect matchings, antifactors, regular multigraphs.
\end{abstract}

\subjclass[2020]{05C10, 05C76, 05C78}

\maketitle

We need the following classical result (see \cite{Alon}).

\begin{theorem}[Combinatorial Nullstellensatz]\label{CN}
Let $\mathbb{F}$ be an arbitrary field, and let $f=f(x_1,\ldots,x_n)$
be a polynomial in $\mathbb{F}[x_1,\ldots,x_n]$. Suppose
that the  total degree
of $f$ is at most $\sum_{i=1}^n t_i$, where  each $t_i$ is  a  nonnegative
integer,  and  suppose  that the coefficient of the monomial
$\prod_{i=1}^n x_i^{t_i}$ is non-zero.  Then, if $S_1,\ldots,S_n$ are
subsets of $\mathbb{F}$ with $|S_i|> t_i$, there is $(s_1,\ldots,s_n)
\in S_1\times \ldots \times S_n$ so that $f(s_1,\ldots,s_n)\ne 0$.
\end{theorem}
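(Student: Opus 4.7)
The plan is to argue by contradiction: assume that $f(s_1,\ldots,s_n)=0$ for every $(s_1,\ldots,s_n)\in S_1\times\cdots\times S_n$, and derive that the coefficient of $\prod_{i=1}^n x_i^{t_i}$ must in fact be zero, contradicting the hypothesis.

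The first step is a preliminary lemma, a multivariate extension of the fact that a nonzero univariate polynomial has at most as many roots as its degree: if $p\in\mathbb{F}[x_1,\ldots,x_n]$ satisfies $\deg_{x_i}p\leq t_i$ for each $i$ and vanishes on $S_1\times\cdots\times S_n$ with $|S_i|>t_i$, then $p\equiv 0$. I would prove this by induction on $n$, viewing $p$ as a polynomial in $x_n$ whose coefficients lie in $\mathbb{F}[x_1,\ldots,x_{n-1}]$: the univariate case $n=1$ is immediate, and for $n\geq 2$ one fixes $(s_1,\ldots,s_{n-1})\in S_1\times\cdots\times S_{n-1}$, applies the univariate case to $p(s_1,\ldots,s_{n-1},x_n)$, and then invokes the induction hypothesis on each resulting coefficient.

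Next I would reduce $f$ modulo the polynomials $g_i(x_i):=\prod_{s\in S_i}(x_i-s)$, each monic of degree $|S_i|$ and vanishing on $S_i$. Using $g_i=0$ on $S_i$ to rewrite $x_i^{|S_i|}$ as a polynomial of $x_i$-degree strictly less than $|S_i|$, one iteratively replaces any monomial in $f$ having $x_i$-exponent at least $|S_i|$, obtaining a polynomial $\tilde f$ with $\deg_{x_i}\tilde f<|S_i|$ for every $i$. By construction $f-\tilde f$ lies in the ideal generated by $g_1,\ldots,g_n$ and so vanishes on the grid $S_1\times\cdots\times S_n$; combined with the standing assumption on $f$, this gives that $\tilde f$ also vanishes on the grid, and the preliminary lemma then forces $\tilde f\equiv 0$.

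The crux — the step that needs the most care — is to verify that this reduction preserves the coefficient of the monomial $\prod_i x_i^{t_i}$. Each rewriting step replaces a monomial of $x_i$-degree at least $|S_i|$ by a sum of monomials of strictly smaller total degree, while leaving the degrees in the other variables unchanged. Since $\deg f\leq\sum_i t_i$, every monomial appearing in $\tilde f$ of total degree exactly $\sum_i t_i$ must have been present already in $f$, must have survived all reductions (so it satisfies $a_i<|S_i|$ for each $i$), and must be the result of no rewriting. In particular the contribution to the coefficient of $\prod_i x_i^{t_i}$ is entirely inherited from $f$ and remains nonzero by hypothesis, contradicting $\tilde f\equiv 0$ and completing the proof.
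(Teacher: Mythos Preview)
The paper does not give its own proof of this statement; it is quoted as a classical result with a reference to Alon~\cite{Alon}. Your argument is correct and is essentially Alon's original proof: reduce $f$ modulo the grid polynomials $g_i(x_i)=\prod_{s\in S_i}(x_i-s)$, observe that the reduction cannot affect the coefficient of $\prod_i x_i^{t_i}$ (each rewriting step strictly lowers total degree, and this monomial already sits at the maximum allowed total degree), and then apply the multivariate root-counting lemma to force $\tilde f\equiv 0$, contradicting the survival of that nonzero coefficient.
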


Taking $\mathbb{F}$ to be $\mathrm{GF}(q)$, the Galois field on $q$
elements, we obtain the following simple corollary.

\begin{corollary}\label{c:CN}
Let $q$ be a prime power, and let $f=f(x_1,\ldots,x_n)$
be a polynomial in $\mathrm{GF}(q)[x_1,\ldots,x_n]$. Suppose
that the  total degree
of $f$ is at most $(q-1)n$, and  suppose  that the coefficient of the monomial
$\prod_{i=1}^n x_i^{q-1}$ is non-zero.  Then, there is $(s_1,\ldots,s_n)
\in \mathrm{GF}(q)^n$ so that $f(s_1,\ldots,s_n)\ne 0$.
\end{corollary}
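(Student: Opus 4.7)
The statement is a direct specialization of Theorem \ref{CN}, so my plan is simply to invoke that theorem with a judicious choice of parameters and verify the hypotheses line up. Concretely, I will take $\mathbb{F} = \mathrm{GF}(q)$, set $t_i = q-1$ for every $i \in \{1,\ldots,n\}$, and choose $S_i = \mathrm{GF}(q)$ for every $i$.

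With these choices the hypotheses of Theorem \ref{CN} translate exactly into the hypotheses of the corollary. First, $\sum_{i=1}^n t_i = (q-1)n$, so the assumption that $f$ has total degree at most $(q-1)n$ is precisely the bound needed. Second, the monomial $\prod_{i=1}^n x_i^{t_i}$ coincides with $\prod_{i=1}^n x_i^{q-1}$, whose coefficient is assumed to be non-zero. Third, $|S_i| = |\mathrm{GF}(q)| = q > q-1 = t_i$, which is the size condition on the sets $S_i$. All three hypotheses of Theorem \ref{CN} are therefore satisfied.

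Applying Theorem \ref{CN} then yields a point $(s_1,\ldots,s_n) \in S_1 \times \cdots \times S_n = \mathrm{GF}(q)^n$ at which $f$ is non-zero, which is exactly the conclusion. There is no genuine obstacle here; the only thing to double-check is the bookkeeping that the three hypotheses correspond correctly and that $S_1\times\cdots\times S_n$ really is the whole space $\mathrm{GF}(q)^n$, both of which are immediate.
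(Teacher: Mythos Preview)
Your proof is correct and is exactly the approach the paper takes: it states the corollary immediately after Theorem~\ref{CN} with only the remark that one takes $\mathbb{F}=\mathrm{GF}(q)$, which amounts precisely to your choice $t_i=q-1$ and $S_i=\mathrm{GF}(q)$.
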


For a vertex $v$ in a multigraph $G$, the degree of $v$ in $G$ is denoted
by $d_G(v)$.
Let
 $\mathrm{pm}(G) $ denote the number of perfect matchings of a multigraph 
 $G$.

 \begin{theorem}\label{th:main}
   Let $q$ be a prime power and let $G$ be a $q$-regular bipartite multigraph, with bipartition $(U,V)$,
and assume that each vertex $v\in V$ is assigned an arbitrary value
$\alpha(v)\in\{0,1,\ldots,q-1\}$. If  $\mathrm{pm}(G) \ne 0\pmod q$, then $G$ has a
spanning subgraph $H$ such that for each vertex $u \in U$, $d_H(u)=1$, and for each  vertex $v \in V$, $d_H(v)\ne\alpha(v) \pmod q$.
\end{theorem}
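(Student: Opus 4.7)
The plan is to parametrize the spanning subgraphs $H$ with $d_H(u)=1$ for every $u \in U$ by vectors in $\mathrm{GF}(q)^U$, and then apply Corollary \ref{c:CN} to a polynomial $P(y)$ whose non-vanishing encodes the $V$-side requirements. By König's edge-coloring theorem for bipartite multigraphs, $G$ decomposes into $q$ perfect matchings; I index them by $\mathrm{GF}(q)$ as $M_0,\ldots,M_{q-1}$ and write $M_i(u)$ for the $V$-endpoint of the edge of $M_i$ at $u$. For $y=(y_u)_{u\in U}\in\mathrm{GF}(q)^U$, let $H(y)$ be the spanning subgraph whose edge at $u$ is the one in $M_{y_u}$. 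By construction $d_{H(y)}(u)=1$ for every $u$, so it suffices to find $y$ such that $d_{H(y)}(v)\not\equiv\alpha(v)\pmod q$ for every $v\in V$.

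Translating $d_{H(y)}(v)$ into a polynomial in $y$, the identity $[y_u=i]=1-(y_u-i)^{q-1}$ over $\mathrm{GF}(q)$ together with $d_G(v)=q\equiv0\pmod q$ give
\[
d_{H(y)}(v)\equiv -\sum_{(u,i):\,M_i(u)=v}(y_u-i)^{q-1}\pmod q.
\]
I then define
\[
P(y)=\prod_{v\in V}\bigl(d_{H(y)}(v)-\alpha(v)\bigr)\in\mathrm{GF}(q)[y_u:u\in U].
\]
Each factor has total degree $q-1$, so $P$ has total degree at most $|V|(q-1)=|U|(q-1)$, matching the budget of Corollary \ref{c:CN} with every $t_u=q-1$ and $S_u=\mathrm{GF}(q)$. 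The problem thus reduces to showing that the coefficient of $\prod_{u\in U}y_u^{q-1}$ in $P$ is nonzero in $\mathrm{GF}(q)$.

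This coefficient computation is the heart of the argument. Since the total degree of $P$ equals that of the target monomial, only the degree-$(q-1)$ homogeneous part of each factor contributes, and the top part at $v$ equals $-\sum_u m(u,v)\,y_u^{q-1}$, where $m(u,v)$ is the multiplicity of the edge $uv$ (using that $(y_u-i)^{q-1}$ has leading coefficient $1$). Expanding $\prod_v\bigl(-\sum_u m(u,v)\,y_u^{q-1}\bigr)$ and isolating $\prod_u y_u^{q-1}$ picks out exactly the bijections $\phi:V\to U$ with $\phi(v)v\in E(G)$ for every $v$, weighted by $\prod_v m(\phi(v),v)$; summing those weights yields the permanent of the biadjacency matrix, namely $\mathrm{pm}(G)$. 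Hence the desired coefficient equals $(-1)^{|V|}\mathrm{pm}(G)$, nonzero modulo $q$ by hypothesis, and Corollary \ref{c:CN} supplies the required $y$.

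The obstacle to flag is degree control. A direct Nullstellensatz encoding that used $(q-1)$-th-power indicators both on $U$ (to force $d_H(u)=1$) and on $V$ (to avoid $\alpha(v)$) would produce a polynomial of total degree on the order of $|U|(q-1)^2$, exceeding the budget $(q-1)|U|$ as soon as $q\ge 3$. Absorbing the $U$-side condition into the parametrization via the König decomposition is what trims the degree: only one factor of degree $q-1$ survives for each $v\in V$, the total degree lands exactly at the budget, and the leading coefficient is forced to be precisely $\pm\,\mathrm{pm}(G)$, which is exactly where the hypothesis $\mathrm{pm}(G)\not\equiv0\pmod q$ is used.
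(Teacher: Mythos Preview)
Your proof is correct and follows essentially the same approach as the paper: you parametrize the subgraphs $H$ via a proper $q$-edge-coloring (equivalently, a K\"onig decomposition into $q$ perfect matchings), encode the $V$-side condition as a product of degree-$(q-1)$ polynomials over $\mathrm{GF}(q)$, and identify the coefficient of $\prod_{u\in U} y_u^{q-1}$ as $(-1)^{|V|}\mathrm{pm}(G)$ before invoking Corollary~\ref{c:CN}. Your write-up in fact spells out the leading-coefficient computation (via the permanent of the biadjacency matrix) more explicitly than the paper does.
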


\begin{proof}
Since $G$ is $q$-regular and bipartite, it has a $q$-edge-coloring $c$. It
will be convenient to consider the $q$ colors used by $c$ as the
elements of $\mathrm{GF}(q)$, the Galois field on $q$ elements. 
For each element $i\in \mathrm{GF}(q)$, let
$g_{i}(x)=1-(x-i)^{q-1}$ (considered as a polynomial over $\mathrm{GF}(q)$ with
a single variable $x$). Note that 
\begin{equation*}
  g_{i}(x)  =
    \begin{cases}
      0 & \text{if $x \ne i$}\\
      1 & \text{if $x =i$.}
        \end{cases}       
      \end{equation*}
      We
associate to each vertex $u\in U$ a variable $x_u \in \mathrm{GF}(q)$,
and we define the polynomial $f$ over $\mathrm{GF}(q)$ with variables $(x_u)_{u \in U}$
  as follows:
$$f=\prod_{v \in V}\left[\left( \sum_{uv \in
      E(G)}g_{c(uv)}(x_u)\right)-\alpha(v)\right],$$ where the sum
ranges over all the edges incident to $v$  (multiple edges
 appear multiple times in the sum), and $c(uv)$ denotes the
color of the edge $uv$ in the $q$-edge-coloring $c$.

 Since each polynomial $g_{i}$ has degree $q-1$, $f$ has degree at most $(q-1)|V|=(q-1)|U|$
 (the equality $|U|=|V|$ follows from the fact that $G$ is regular and
 bipartite). Observe that the linear coefficient of the monomial $\prod_{u\in U}x_u^{q-1}$ in
 $f$ is precisely $(-1)^{|V|}\mathrm{pm}(G) \pmod q$. Since $\mathrm{pm}(G) \ne 0 \pmod q$, it
 follows from Corollary~\ref{c:CN} that for each $u\in U$ there is a
 variable $x_u\in \mathrm{GF}(q)$, such that $f$ is non-zero when
 evaluated in $(x_u)_{u \in U}$. For each vertex $u\in U$, let $e_u$
 be the unique edge incident to $u$ such that $c(e_u)=x_u$, and let $H$ be
 the subgraph of $G$ induced by the edges $(e_u)_{u\in U}$. Then
 by definition each vertex of $U$ has degree 1 in $H$. Since $f$ is
 non-zero on $(x_u)_{u \in U}$, we have that for each $v\in V$, $$\sum_{uv \in
      E(G)}g_{c(uv)}(x_u)\ne \alpha(v)\pmod q.$$ It remains to
    observe that the sum appearing on the left hand
    side counts precisely the degree of $v$ in $H$.
  \end{proof}

   We emphasize that since the graph $G$ is $q$-regular, the
    condition $d_H(v)\ne\alpha(v)
    \pmod q$ is only strictly stronger than $d_H(v)\ne\alpha(v)$ if $\alpha(v)=0$. In this case, this
    is equivalent to say  that $v$ must have degree distinct from $0$
    and $q$ in $H$.

\medskip

 We remark that the condition $\mathrm{pm}(G)\ne 0 \pmod q$ is necessary: examples include the unique $q$-regular bipartite multigraph on
    2 vertices, and cycles of length $2 \pmod 4$ in which the edges of
    a perfect
    matching are replaced by multiple edges with multiplicity $q-1$. These multigraphs have a number of
    perfect matchings that is divisible by $q$, and do not have a subgraph $H$
    as in Theorem~\ref{th:main} if $\alpha(v)=1$ for each vertex $v$
    in one part of the bipartition. It was proved in~\cite{LWY} (see
    also \cite{Seb20}) that
    in this specific case (when $\alpha$ is the function
    assigning the value 1 to each
    vertex of the domain), such examples necessarily contain multiple
    edges.

    \medskip
    
    This is reminiscent of the Berge-Sauer conjecture, which stated that
    every 4-regular graph contains a 3-regular subgraph. This conjecture was proved
    by Tashkinov \cite{Tas84}, and was known to be false for
    multigraphs. On the other hand, Alon Friedman and Kalai \cite{AFK84} 
    proved that every 4-regular multigraph \emph{plus an edge}
    contains a 3-regular subgraph, using the polynomial method. So in
    both situations we have a problem with a combinatorial solution,
    whose conclusion only holds for simple graphs, while polynomial
    techniques show that a slighlty weaker
    statement holds for multigraphs.

     \medskip

Given the necessary condition in the statement of Theorem
\ref{th:main}, it is natural to investigate the typical residue of
the number of perfect matchings modulo $q$ for bipartite $q$-regular multigraphs.


\begin{conjecture}\label{conj}
For any odd integer $q\ge 3$ there is a real
$\varepsilon>0$ such that for any $i\in \{0,1,\ldots,q-1\}$, the number of perfect matchings
modulo $q$ of an $n$-vertex random $q$-regular bipartite (multi)graph
($n$ even and sufficiently large)
is equal to $i$ with probability at least $\varepsilon$.
\end{conjecture}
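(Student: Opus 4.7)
The plan is to attack the conjecture via the switching method in the configuration model for random $q$-regular bipartite multigraphs, in which each vertex of $U$ and $V$ is assigned $q$ labelled stubs and the $U$-stubs are matched uniformly at random with the $V$-stubs. Standard contiguity arguments then allow transfer to the uniform distribution on $q$-regular bipartite simple graphs once the multigraph case is controlled.

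The key tool would be a local switching: pick two edges $u_1v_1$ and $u_2v_2$ with $u_1\ne u_2$ and $v_1\ne v_2$, and replace them by $u_1v_2$ and $u_2v_1$. This preserves $q$-regularity and the bipartition, and realizes a natural random walk on the set of $q$-regular bipartite multigraphs. The resulting change in $\mathrm{pm}(G)$ can be computed by a Laplace-type expansion of the permanent of the biadjacency matrix $A$ along rows $u_1,u_2$: writing $M_{jk}$ for the permanent of the submatrix obtained from $A$ by deleting rows $u_1,u_2$ and columns $j,k$, one obtains an explicit formula for $\mathrm{pm}(G')-\mathrm{pm}(G)$ as a $\mathbb{Z}$-linear combination of the $M_{jk}$ whose coefficients depend only on the multiplicities of the four entries $A[u_i,v_j]$.

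The strategy is then to show that, for a uniformly random $q$-regular bipartite multigraph $G$, a short random sequence of such switchings produces a shift of $\mathrm{pm}(G)$ modulo $q$ whose distribution is close to uniform on $\mathrm{GF}(q)$. Since a bounded number of switchings alter the measure on configurations only by a bounded factor, each residue class must then carry a constant fraction of the total probability mass, yielding the required $\varepsilon$.

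The main obstacle is an anti-concentration statement for the minors $M_{jk}$ modulo $q$: one needs that with probability bounded away from zero at least one available switching produces a non-zero shift modulo $q$, and more strongly that by iterating one can realize every residue. Controlling the mod-$q$ distribution of minors of the biadjacency matrix is a close cousin of the conjecture itself applied to slightly smaller graphs, so this naive approach seems to require either an inductive setup or a genuinely new structural input. A possible alternative route is a Fourier-analytic estimate over $\mathrm{GF}(q)$ bounding the non-trivial characters of $\mathrm{pm}(G)\bmod q$, but this again reduces to controlling oscillatory sums of permanents modulo $q$, so the difficulty appears intrinsic to the problem.
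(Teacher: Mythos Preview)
The statement you are attempting to prove is labelled a \emph{conjecture} in the paper and is not proved there; the paper offers only numerical evidence for small $n$ and $q=3$. So there is no ``paper's proof'' to compare against, and the relevant question is simply whether your proposal establishes the result.

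It does not, and you essentially say so yourself. Your outline correctly sets up the configuration model and the switching move, and the Laplace expansion of the permanent along two rows does give the change $\mathrm{pm}(G')-\mathrm{pm}(G)$ as an integer combination of the $(n-2)\times(n-2)$ permanent minors $M_{jk}$. But the entire argument then rests on the ``main obstacle'' you identify: showing that with probability bounded away from zero some available switching yields a nonzero shift modulo $q$, and that iterating reaches every residue. As you note, this anti-concentration statement for the $M_{jk}$ modulo $q$ is essentially the conjecture itself for a graph on two fewer vertices on each side, so the argument is circular rather than inductive (there is no base case that closes the loop, and the reduction does not gain any quantitative slack). The Fourier-analytic alternative you mention runs into the same wall for the same reason.

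There is also a secondary issue you gloss over: even granting that a single switching produces a nonzero shift modulo $q$ with positive probability, for composite $q$ a nonzero element need not generate $\ZZ/q\ZZ$, so ``iterating to reach every residue'' requires an additional argument that the set of achievable shifts is not trapped in a proper subgroup. For prime $q$ this is automatic, but the conjecture is stated for all integers $q\ge 3$.

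In short, what you have written is a reasonable description of why the problem is hard and where the difficulty concentrates, but it is a research plan with an acknowledged missing lemma, not a proof.
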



The reason we restrict the conjecture above to odd values of $q$ is a
classical result of Little \cite{Lit72}, stating that every $q$-regular
bipartite graph with $q$ even has an even number of perfect
matchings.

It is plausible that the limiting distribution of the number of perfect matchings
modulo $q$ of an $n$-vertex random $q$-regular bipartite (multi)graph
($q$ odd),
when $n\to\infty$ ($n$ even)  is the uniform distribution over $\{0,1,\ldots
,q-1\}$. Exact computations with $q=3$ and $n= 26$  seem
to confirm this. There are 245627	 (non-isomorphic) connected cubic bipartite graphs on 26 vertices
\cite{Bri96,house}, and it can be checked that the proportion
of these graphs with
$\mathrm{pm}(G)= 0,1,2 \pmod 3$ is 0.366,
0.314, and 0.321
respectively. Observe that if in a cubic graph $G$ there is a vertex $u$ such that
for any pair of edges $e,f$ incident to $u$, there is an automorphism
mapping $e$ to $f$, then the number of perfect matchings is 
divisible by 3. The proportion of graphs satisfying this type of
property is significant when the number of vertices is small (which
might explain the larger proportion $0.366 >\tfrac13$ in the case $n=26$ above). However
this proportion is vanishingly small as $n\to \infty$.


\medskip

 See \cite{AHK11} for some results on the residue modulo $q$ of the number of
 (non necessarily perfect) matchings in random trees. In this case the
 situation is very different from above, and it can be shown that
 a.a.s.\ the
 number of matchings is divisible by $q$.




 \medskip

 We conclude with a final remark. Recall that Lu, Wang, and Yan
 \cite{LWY} proved
 that  any bipartite $q$-regular graph $G$ with bipartition $(U,V)$ has a
  spanning subgraph $H$ such
that each vertex of $U$ has degree 1 in $H$,
and each vertex of $V$ has degree distinct from 1 in $H$. This
corresponds to the conclusion of Theorem \ref{th:main} with
$\alpha(v)=1$ for any $v\in V$. Observe that when  $G$ is a
multigraph, instead of requiring that
the number of perfect matchings of $G$ is not divisible by $q$ and
applying Theorem \ref{th:main}, it is
enough to assume that $G$ contains a spanning 3-regular subgraph $G'$
whose number of perfect matching is not divisible by 3 (since in this
case we can apply Theorem \ref{th:main} to $G'$ with $\alpha(v)=1$ for
any $v\in V$, and obtain the desired spanning subgraph $H$). Let us
say that a $q$-regular bipartite multigraph $G$ is \emph{bad} if it
does not contain a 3-regular spanning subgraph whose number of perfect
matchings is not divisible by 3.

\begin{question}
  Can we completely describe the bad
  graphs? Is there a polynomial time algorithm for detecting whether a
  multigraph is bad? 
\end{question}

\begin{acknowledgement}
I would like to thank Zolt\'an Szigeti, Andr\'as Seb\H{o}, and
St\'ephan Thomass\'e  for the discussions, and Prajit Adhikari for
pointing out that the original version of Conjecture \ref{conj} was
false for even values of $q$.
\end{acknowledgement}

\section*{Statements and Declarations}

\noindent {\it Funding.} This work was partially supported by ANR Projects GATO
(\textsc{anr-16-ce40-0009-01}) and GrR (\textsc{anr-18-ce40-0032}).\\

\noindent {\it Competing interests.} The author has no relevant financial or non-financial interests to disclose.\\
  
\noindent {\it Data availability.} The short Sage code used to compute the
number of perfect matchings modulo 3 in the graphs from \cite{house} is available from the corresponding author on request.

\end{document}